\newtheorem{theorem}{Theorem}
\newtheorem{definition}{Definition}
\newtheorem{corollary}{Corollary}
\newtheorem{proposition}{Proposition}
\begin{document}
\begin{center}
{\Large  On Rota-Baxter operators of non-zero weight induced by  non
skew-symmetric solutions of the classical Yang-baxter equation  on
simple anticommutative  algebras.}
\end{center}
\author{{M.E. Goncharov}}
\email{goncharov.gme@gmail.com}

\vspace{7mm}

\begin{center}
{\bf M.\,E.\,Goncharov}
\end{center}
\renewcommand{\refname}{References}
\renewcommand{\proofname}{Proof.}

\renewcommand{\abstractname}{Abstract}
\begin{quote}
\noindent{\sc Abstract. }  Let $L$ be a simple anti-commutative
algebra.  In this paper we prove that  a non skew-symmetric solution
of the classical Yang-Baxter equation on $L$ with $L$-invariant
symmetric part induces on $L$ a Rota-Baxter operator of a non-zero
weight. \medskip

\noindent{\bf Keywords:} Rota-Baxter operator, anti-commutative
algebra, Lie algebra, Malcev algebra, non-associative bialgebra,
classical Yang-Baxter equation.
 \end{quote}
 \vspace{6pt}

\sloppy Given an algebra A over a field $F$ and scalar $\lambda \in
F$, a linear operator $R: A \rightarrow A$ is called a Rota—Baxter
of the weight $\lambda$ if for all $x,y\in A$ the following identity
holds:
\begin{equation}\label{el}
R(x)R(y) = R(R(x)y + xR(y) + \lambda xy).
\end{equation}

As an example of a Rota-Baxter operation of weight zero one can
consider the operation of integration on the algebra of continuous
functions on $\mathbb{R}$: the equation \eqref{el} follows from the
integration by parts formula.

An algebra with a Rota-Baxter operation is called a Rota-Baxter
algebra. These algebras first appeared in the paper of Baxter
\cite{Br}. The combinatorial properties of Rota-Baxter algebras and
operations were studied in papers of F.V. Atkinson, P. Cartier,
G.-C. Rota and the others (see \cite{Atk}-\cite{Car}). For basic
results and the main properties of Rota-Baxter algebras see
\cite{Guo}.

There is a standard method for constructing Rota-Baxter operations
of zero weight on a simple Lie algebra $L$ from skew-symmetric
solutions of the classical Yang-Baxter equations (CYBE): if $r=\sum
a_i\otimes b_i$ is a skew-symmetric solution of CYBE, then one can
define an operator $R$ on $L$ by $R(a)=\sum\langle a_i,a\rangle
b_i$, where $\langle \cdot, \cdot \rangle$ is a Killing form on L.
It turns out that $R$ is a Rota-Baxter operator of weight 0
(\cite{BD}-\cite{STS}).

In this paper we consider the case when $L$ is a simple
anti-commutative algebra and $r\in L\otimes L$ is a non
skew-symmetric solution of CYBE with $L$-invariant symmetric part.
It turns out that instead of the element $r$ it is more convenient
to consider structure that
 closely connected with the solutions of CYBE(skew-symmetric or
not): the structure of bialgebra (a vector space with multiplication
and comultiplication).

Lie bialgebras were introduced by Drinfeld \cite{Drinf} for studying
the solutions of the classical Yang-Baxter equation on Lie algebras.
Lie bialgebras are Lie algebras and Lie coalgebras at the same time,
such that comultiplication is a 1-cocycle. Every skew-symmetric
solution of the classical Yang-Baxter equation induces a structure
of a Lie bialgebra on the corresponding Lie algebra $L$. It is known
that in this case the Drinfeld double contain a non-zero radical.

If $r$ is not a skew-symmetric solution of CYBE then the
corresponding comultiplication gives a structure of a Lie bialgebra
if and only if $r+\tau(r)$ is a $L$-invariant element of $L\otimes
L$. Here $\tau$ is a switch morphism.

 In section 2 we consider  a structure of bialgebra on an arbitrary simple finite-dimensional algebra $A$ over a field of characteristic zero
with a semisimple Drinfeld double. We prove that the structure
induce on $A$  Rota-Baxter operators of a non-zero weight.

In section 3 we consider anti-commutative bialgebras with semisimple
non-simple Drinfeld double constructed on simple anti-commutative
algebras. We prove that in this case Rota-Baxter operators from
section 2 are induced by non skew-symmetric solutions of the
classical Yang-Baxter equations with $ad$-invariant symmetric part.
As a corollary we obtain that if $L$ is a simple Lie algebra and
$r=\sum a_i\otimes b_i$ is a non skew-symmetric solutions of the
classical Yang-Baxter equations such that $r+\tau(r)$ is a
$L$-invariant, then an operator $R$ on $L$ defined by
$R(a)=\sum\langle a_i,a\rangle b_i$ is a Rota-Baxter operator of a
non-zero weight.

In the last section we use the results from the section 3 and
construct Rota-Baxter operators of non-zero weight on simple non-Lie
Malcev algebra.

\section{Definitions and preliminary results.}

In the paper it is assumed that the characteristic of the ground
field $F$ is 0 and all spaces are supposed to be finite-dimensional.

 Given vector spaces $V$ and $U$ over a field $F$, denote
by $V\otimes U$ its tensor product over $F$. Define the linear
mapping $\tau$ on $V$ by $\tau(\sum\limits_ia_i\otimes
b_i)=\sum\limits_ib_i\otimes a_i$. Denote by $V^*$ the dual space of
$V$.

\begin{definition} A pair $(A, \Delta)$, where $A$ is a vector space over $F$ and
$\Delta : A \rightarrow A\otimes A$ is a linear mapping, is called a
coalgebra, while $\Delta$ is a comultiplication.
\end{definition}

The following definition  of a coalgebra related to some variety of
algebras was given in \cite{ANQ}.

{\bf Definition.} Let $\mathcal{M}$ be an arbitrary variety of
algebras. The pair $(A, \Delta)$ is called a $\mathcal{M}$-coalgebra
if $A^*$ belongs to $\mathcal{M}$.

Given $a\in A$, put $\Delta(a) = \sum a_{(1)}\otimes a_{(2)}$.

Define a multiplication on $A^*$ by
$$
 fg(a)=\sum f(a_{(1)})g(a_{(2)}),
$$
where $f,g\in A^*$, $a\in A$ and $\Delta(a)=\sum a_{(1)}\otimes
a_{(2)}$.  The algebra obtained is  \emph{the dual algebra} of the
coalgebra $(A, \Delta)$.

The dual algebra $A^*$ of $(A,\Delta)$ gives rise to the following
bimodule actions on $A$:
$$
f\rightharpoonup a=\sum a_{(1)}  f(a_{(2)})\text{ and
}a\leftharpoonup f=\sum f(a_{(1)}) a_{(2)},
$$
where $a\in A,\ f\in A^*$ and $\Delta(a)=\sum a_{(1)}\otimes
a_{(2)}$.

Let $A$ be an arbitrary algebra with a comultiplication $\Delta$,
and let $A^*$ be the dual algebra for $(A, \Delta)$. Then $A$
induces the bimodule action on $A^*$ by the formulas
$$
 f\leftharpoondown a(b)= f(ab)\text{ and }
 b\rightharpoondown f(a)= f(ab),
$$
where $a,b\in A,\ f\in A^*$.

Consider the space $D(A) = A \oplus A$ and equip it with a
multiplication by putting
$$
(a+f)(b+g)=(ab+f\rightharpoonup b+a\leftharpoonup
g)+(fg+f\leftharpoondown b+a\rightharpoondown g).
$$
Then $D(A)$ is an ordinary algebra over $F$, $A$ and $A^*$ are some
subalgebras in $D(A)$. It is called \emph{the Drinfeld double}.

Let $Q$ be a bilinear form on $D(A)$ defined by
$$
Q(a+f,b+g)= g(a)+ f(b)
$$
for all $a,b\in A$ and $f,g\in A^*$. It is easy to check that $Q$ is
a nondegenerate symmetric associative form, that is
$Q(xy,z)=Q(x,yz)$.

In \cite{Drinf} the following definition was given:

\begin{definition} Let $L$ be a Lie algebra with a comultiplication
$\Delta$. The pair $(L,\Delta)$ is called a Lie bialgebra if and
only if $(L,\Delta)$ is a Lie coalgebra and $\Delta$ is a 1-cocycle,
i.e., it satisfies
$$
\Delta([a,b])=\sum([a_{(1)},b]\otimes
a_{(2)}+a_{(1)}\otimes[a_{(2)},b])+\sum([a,b_{(1)}]\otimes
b_{(2)}+b_{(1)}\otimes[a,b_{(2)}])
$$
for all $a,b\in L$.
\end{definition}
In \cite{Drinf}, it was proved that the pair $(L,\Delta)$ is a Lie
bialgebra if and only if its Drinfeld double $D(L)$ is a Lie
algebra.

There is an important type of Lie bialgebras called coboundary
bialgebras. Namely, let $L$ be a Lie algebra and
$r=\sum\limits_ia_i\otimes b_i$ from $(id-\tau)(L\otimes L)$, that
is, $\tau(r)=-r$. Define a comultiplication $\Delta_r$ on $L$ by
$$
\Delta_r(a)=\sum\limits_i[a_i,a]\otimes b_i-a_i\otimes[a,b_i]
$$
for all $a\in L$. It is easy to see that $\Delta_r$ is a 1-cocycle.
In \cite{BD} it was proved that $(L,\Delta_r)$ is a Lie coalgebra if
and only if the element
$$
C_L(r)=[r_{12},r_{13}]-[r_{23},r_{12}]+[r_{13},r_{23}]
$$
is $L$-invariant. Here
$[r_{12},r_{13}]=\sum\limits_{ij}[a_i,a_j]\otimes b_i\otimes b_j$,
$[r_{23},r_{12}]=\sum\limits_{ij}a_i\otimes[a_j,b_i]\otimes b_j$,
 and $[r_{13},r_{23}]=\sum\limits_{ij} a_i\otimes a_j\otimes [b_i,b_j]$.
In particular, if
\begin{equation}\label{lieYB}
\sum\limits_{ij}[a_i,a_j]\otimes b_i\otimes
b_j-a_i\otimes[a_j,b_i]\otimes b_j+a_i\otimes a_j\otimes
[b_i,b_j]=0,
\end{equation}
then the pair $(L,\Delta_r)$ is a Lie bialgebra. The equation
\eqref{lieYB} is called \emph{the classical Yang-Baxter equation}.

The equation \eqref{lieYB} can de considered for every variety of
algebras. Let $A$ be an arbitrary algebra and
$r=\sum\limits_ia_i\otimes b_i\in A\otimes A$. Then the equation
\begin{equation}\label{YB}
  C_A(r)=r_{12}r_{13}+r_{13}r_{23}-r_{23}r_{12}=0
\end{equation}
is called the classical Yang-Baxter equation on $A$. Here the
subscripts specify the way of embedding $A\otimes A$ into $A\otimes
A\otimes A$, that is, $r_{12}=\sum\limits_{i} a_i\otimes b_i\otimes
1$, $r_{13}=\sum_i a_i\otimes 1\otimes b_i$, $r_{23}=\sum_i 1\otimes
a_i\otimes b_i$. Note that $C_A(r)$ is well defined even if $A$ is
non-unital. The equation for different varieties of algebras was
considered in \cite{Zhelyabin98, Aquiar, Zhelyabin, Polishchuk,
Gme}.

An element $r=\sum\limits_{i}a_i\otimes b_i\in A\otimes A$ induces a
comultiplication $\Delta_r$ on $A$:
$$
\Delta_r(a)=\sum\limits a_ia\otimes b_i-a_i\otimes ab_i
$$
for all $a\in A$.

In \cite{Zhelyabin97} the following definition of bialgebra in sense
of Drinfeld for any variety of algebras was given:

{\bf Definition.} Let $\mathcal{M}$ be an arbitrary variety of
algebras and let $A$ be an algebra from $\mathcal{M}$ with a
comultiplication $\Delta$. The pair $(A,\Delta)$ is called \emph{an
$\mathcal{M}$-bialgebra (in the sense of Drinfeld)\/} if its
Drinfeld double $D(A)$ belongs to $\mathcal{M}$.

For Jordan, associative, alternative and Malcev bialgebras it is
knows that if $r$ is a skew-symmetric solution of the classical
Yang-Baxter equation \eqref{YB} on $A$, then $(A,\Delta_r)$ is a
bialgebra of corresponding variety.

In \cite{Zhelyabin} it was proved that if $A$ is an arbitrary simple
algebra equipped with a comultiplication $\Delta$ and $I$ is a
proper ideal of $D(A)$, then $dim(I)=dim(A)$.

If $L$ is a simple finite-dimensional Lie algebra over the field of
complex numbers, then the Drinfild double $D(L)$ is either contain a
radical $R$ such that $D(L)=L\oplus R$ (semidirect sum) and $R^2=0$
or  $D(L)=L_1\oplus L_2$
--- direct sum of two simple ideals that are isomorphic to $L$
\cite{stolin}. In the same paper bialgebra structures on $L$ in both
cases were described.

If $(L,\Delta)$ is a Lie bialgebra and the radical $R$ of $D(L)$ is
not zero then the comultiplication is induced by a skew-symmetric
solution of the classical Yang-Baxter equation, that is
$\Delta=\Delta_r$ for some $r=\sum a_i\otimes b_i\in
(id-\tau)(L\otimes L)$ such that $C_L(r)=0$ (see \cite{stolin} for
the case when $F$ is the field of complex numbers or \cite{GMM} for
the more general case when $L$ is a Malcev algebra). Define an
operator $R: L\rightarrow L$ by:
\begin{equation}\label{RB}
R(x)=\sum\limits \langle a_i,x \rangle b_i,
\end{equation}
where $\langle\cdot,\cdot\rangle$ is the Killing form on $L$. It is
well known that $R$ is a Rota-Baxter operator  of weight 0 on $L$
(see \cite{BD}-\cite{STS}).

\section{Bialgebras with semisimple Drinfeld double and Rota-Baxter  operators of non-zero weights.}

In this section  $A$ is an arbitrary simple finite-dimensional
 algebra over a
field of characteristic zero equipped with a comultiplication
$\Delta$ such that $(A,\Delta)$ is a bialgebra such that $D(A)$ is a
direct sum of simple ideals.

Since $dim(I)=dim(A)$ for every proper ideal $I$ of $D(A)$ we have
that $D(A)=A_1\oplus A_2$ where $A_i$ are simple algebras and $dim
A_i=dim A$ \cite{Zhelyabin}.
\begin{proposition}\label{p1}
There are two linear mapping $\phi_i:A^*\rightarrow A$, $i=1,2$ such
that for all $f\in A^*:\ f-\phi_i(f)\in A_i$. Moreover,
$A_i=\{f-\phi_i(f)|\ f\in A^*\}$.
\end{proposition}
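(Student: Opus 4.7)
The plan is to show that each simple summand $A_i$ is the graph of a linear map $A^* \to A$ with respect to the vector space decomposition $D(A) = A \oplus A^*$. Concretely, this amounts to proving that the projection $\pi: D(A) \to A^*$ along $A$ restricts to a linear isomorphism $\pi|_{A_i}: A_i \to A^*$; the map $\phi_i$ then simply reads off the $A$-component of the preimage.

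The main step is to establish that $V_i := A \cap A_i = 0$. First, I would observe that $V_i$ is an ideal of the algebra $A$: from the Drinfeld double multiplication formula one has $(a+0)(b+0) = ab$, so $A$ is a subalgebra of $D(A)$; thus for $x \in V_i$ and $a \in A$ the products $ax$ and $xa$ lie in $A$ (as $A$ is a subalgebra) and in $A_i$ (as $A_i$ is an ideal of $D(A)$), hence in $V_i$. Since $A$ is simple, either $V_i = 0$ or $V_i = A$.

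The critical obstacle is to rule out $V_i = A$. If it held, then $A \subseteq A_i$, and the established equality $\dim A = \dim A_i$ would force $A = A_i$, making $A$ itself an ideal of $D(A)$. On the other hand, for $a \in A$ and $g \in A^*$ the product in $D(A)$ is $ag = (a \leftharpoonup g) + (a \rightharpoondown g)$, whose $A^*$-component satisfies $(a \rightharpoondown g)(b) = g(ab)$. If $A$ were an ideal then $a \rightharpoondown g = 0$ for all $a \in A$ and $g \in A^*$, which forces $g(ab) = 0$ for all $a, b \in A$ and $g \in A^*$, i.e.\ $A^2 = 0$; this contradicts the simplicity of $A$.

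Knowing $A \cap A_i = 0$, I would conclude as follows. Since $\ker(\pi|_{A_i}) = A \cap A_i = 0$ and $\dim A_i = \dim A^*$, the restriction $\pi|_{A_i}$ is a linear isomorphism. For each $f \in A^*$ let $y_f \in A_i$ denote its unique preimage; writing $y_f = a_f + f$ with $a_f \in A$ and setting $\phi_i(f) := -a_f$ yields a linear map $\phi_i: A^* \to A$ with $f - \phi_i(f) = y_f \in A_i$. Surjectivity of $\pi|_{A_i}$ then gives $A_i = \{f - \phi_i(f) \mid f \in A^*\}$, completing the argument.
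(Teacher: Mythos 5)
Your proof is correct and follows essentially the same route as the paper: show $A\cap A_i=0$, deduce $D(A)=A\oplus A_i$ as vector spaces, and define $\phi_i(f)=-a_i$ from the unique decomposition $f+a_i\in A_i$. You in fact supply a detail the paper glosses over — ruling out $A\cap A_i=A$ via the observation that $A$ being an ideal of $D(A)$ would force $a\rightharpoondown g=0$ and hence $A^2=0$ — which is a welcome addition rather than a divergence.
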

\begin{proof}
Since $A$ is simple then intersections $A$ with $A_i$ are equal to
zero. It means that $D(A)=A\oplus A_1=A\oplus A_2$ (as vector
spaces) and for every $f\in A^*$ there are unique elements $a_i\in
A(i=1,2)$ such that $f+a_i\in A_i$. For $i=1,2$ define $\phi_i$ as
$$
\phi_i(f)=-a_i
$$
if $f+a_i\in A_i$. Since $dimA_i=dim A^*$ we have that
$A_i=\{f-\phi_i(f)|\ f\in A^*\}$.

\end{proof}

Consider $a\in A$. There are unique $l_i\in L_i(i=1,2)$ such that
$a=l_1+l_2$. By proposition \ref{p1} $l_i=f_i+\phi_i(f_i)(i=1,2)$
and $a=(f_1-\phi_1(f_1))+(f_2-\phi_2(f_2))$. Thus, $f_1=-f_2$ and we
proved that for every $a\in A$ there is $f\in A^*$ such that:
\begin{equation}\label{eq2}
a=-\phi_1(f)+\phi_2(f).
\end{equation}

If for some $a\in A$ there are two elements $f_1\in A^*$ and $f_2\in
A^*$ such that $a=-\phi_1(f_i)+\phi_2(f_i)$ then we get
$(f_1-\phi_1(f_1))-(f_1-\phi_2(f_1))=(f_2-\phi_1(f_2))-(f_2-\phi_2(f_2))$.
Since $f-\phi_i(f)\in A_i$ we obtain that $f_1=f_2$

Define a map $\psi:L\rightarrow A^*$ as $\psi(a)=f$ if
$a=-\phi_1(f)+\phi_2(f)$. It is easy to see that $\psi$ is an
isomorphism of vector spaces.

We will need the following properties of the maps $\phi_i$ and
$\psi$:
\begin{proposition}\label{p2}
1. For all $f,g\in A^*$ and $i=1,2$:
\begin{equation}\label{eq3}
\phi_i(fg)=\phi_i(f)\phi_i(g).
\end{equation}

2. For all $a,b\in A$:
\begin{equation}\label{eq4}
\psi(ab)=\psi(a)\leftharpoondown b=a\rightharpoondown \psi(b).
\end{equation}

3. For all $a,b\in A$:
\begin{equation}\label{eq5}
\psi(ab)=\psi(a)\psi(b)-\phi_1(\psi(a))\rightharpoondown\psi(b)-\psi(a)\leftharpoondown\phi_1(\psi(b)).
\end{equation}
and
\begin{equation}\label{eq6}
\psi(ab)=-(\psi(a)\psi(b)-\phi_2(\psi(a))\rightharpoondown\psi(b)-\psi(a)\leftharpoondown\phi_2(\psi(b))).
\end{equation}

\end{proposition}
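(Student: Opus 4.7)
The plan is to exploit systematically the decomposition $D(A) = A_1 \oplus A_2$ as a direct sum of ideals. This gives three levers: each $A_i$ is closed under multiplication, $A_1 A_2 = A_2 A_1 = 0$, and the projections $\alpha_i : D(A) \to A_i$ are algebra homomorphisms. Moreover, by Proposition \ref{p1}, an element of $D(A)$ lies in $A_i$ exactly when its $A$-part (in the $A \oplus A^*$ decomposition) equals $-\phi_i$ applied to its $A^*$-part. Every identity in the proposition will come from computing a suitable product in $D(A)$ that lies in some $A_i$ and matching the $A$- and $A^*$-parts.

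For claim 1, I would compute the three products $(f-\phi_i(f))\cdot g$, $f\cdot (g-\phi_i(g))$, and $(f-\phi_i(f))(g-\phi_i(g))$, all of which lie in $A_i$ (the first two by the ideal property, the third since $A_i$ is closed under multiplication). The matching condition for the first two gives the auxiliary identities
\begin{equation*}
\phi_i(f)\leftharpoonup g = \phi_i(fg) - \phi_i(\phi_i(f)\rightharpoondown g), \qquad f\rightharpoonup \phi_i(g) = \phi_i(fg) - \phi_i(f\leftharpoondown \phi_i(g)).
\end{equation*}
Substituting both of these into the matching condition for the third product makes the bimodule-action terms and two copies of $\phi_i(fg)$ cancel, leaving $\phi_i(fg) = \phi_i(f)\phi_i(g)$.

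For claim 2, decompose $a \in A$ as $a = \alpha_1(a) + \alpha_2(a)$ with $\alpha_1(a) = \psi(a) - \phi_1(\psi(a))$ and $\alpha_2(a) = \phi_2(\psi(a)) - \psi(a)$, as follows from (\ref{eq2}). Since $A_i$ is an ideal, $\alpha_i(a)\cdot b \in A_i$ for any $b \in A$; expanding this product in the $A \oplus A^*$ form and applying the matching condition produces a formula for $\phi_i(\psi(a)\leftharpoondown b)$. Subtracting the $i=1$ formula from the $i=2$ formula, and using $\psi^{-1}(h) = \phi_2(h) - \phi_1(h)$, collapses the right-hand side to $ab$ and yields $\psi(ab) = \psi(a)\leftharpoondown b$. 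The entirely analogous computation with $a\cdot\alpha_i(b) \in A_i$ gives $\psi(ab) = a\rightharpoondown \psi(b)$.

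For claim 3, compute $\alpha_1(a)\alpha_1(b)$ directly from the $D(A)$-multiplication rule; its $A^*$-part works out to $\psi(a)\psi(b) - \psi(a)\leftharpoondown \phi_1(\psi(b)) - \phi_1(\psi(a))\rightharpoondown \psi(b)$. On the other hand, because $\alpha_1$ is an algebra homomorphism, $\alpha_1(a)\alpha_1(b) = \alpha_1(ab) = \psi(ab) - \phi_1(\psi(ab))$, whose $A^*$-part is simply $\psi(ab)$. Equating gives (\ref{eq5}); running the same computation with $\alpha_2(a)\alpha_2(b)$, whose $A^*$-part as $\alpha_2(ab)$ is $-\psi(ab)$, produces (\ref{eq6}). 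The only genuine obstacle throughout is the bookkeeping of the four bimodule actions $\rightharpoonup,\leftharpoonup,\rightharpoondown,\leftharpoondown$ together with the multiplications in $A$ and $A^*$ that appear in the Drinfeld-double product formula; conceptually, the entire proposition reduces to ideal closure together with the characterization $A_i = \{f - \phi_i(f) : f \in A^*\}$.
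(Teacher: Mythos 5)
Your proposal is correct and follows essentially the same route as the paper: everything is deduced from the decomposition $D(A)=A_1\oplus A_2$ into ideals, the characterization $A_i=\{f-\phi_i(f):f\in A^*\}$, and matching the $A$- and $A^*$-components of suitable products. The only cosmetic difference is in claim~1, where the paper gets $fg-\phi_i(f)\phi_i(g)\in A_i$ in one step by expanding $fg=(\phi_i(f)+p)(\phi_i(g)+q)$ with $p,q\in A_i$, whereas you reach the same conclusion via three products and a cancellation.
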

\begin{proof}

Take $f,g\in A^*$ and fix $i=1,2$. From the definition of $\phi_i$
we have that $f-\phi_i(f)=p\in A_i$ and $g-\phi_i(g)=q\in A_i$.
Then, since $A_i$ is an ideal of $D(A)$, $fg=\phi_i(f)\phi_i(g)+s$
for some $s\in A_i$. In means that $fg-\phi_i(f)\phi_i(g)\in A_i$
and by the definition of $\phi_i$ we conclude that
$\phi_i(fg)=\phi_i(f)\phi_i(g)$.

Let us prove \eqref{eq4}. Let $a,b\in A$. Then
$$
ab=(\psi(a)-\phi_1(\psi(a)))b-(\psi(a)-\phi_2(\psi(a)))b=
$$
$$
(\psi(a)\leftharpoondown b+\psi(a)\rightharpoonup
b-\phi_1(\psi(a))b)-(\psi(a)\leftharpoondown
b+\psi(a)\rightharpoonup b-\phi_2(\psi(a))b).
$$
Since $A_1$ is an ideal, the expression in the first brackets lies
in $A_1$. Thus, $\psi(ab)=\psi(a)\leftharpoondown b$ by the
definition of the map $\psi$. Similar arguments show that
$\psi(ab)=a\rightharpoondown \psi(b)$ and \eqref{eq4} is proved.

In order to prove \eqref{eq5} and \eqref{eq6} consider elements
$a,b\in A$. We have:
$$
a=(\psi(a)-\phi_1(\psi(a)))+(-\psi(a)+\phi_2(\psi(a)))
$$
$$
b=(\psi(b)-\phi_1(\psi(b)))+(-\psi(b)+\phi_2(\psi(b)))
$$
Multiplying $a$ and $b$ in $D(A)$ we get:
$$
ab=(\psi(a)\psi(b)-\phi_1(\psi(a))\rightharpoondown\psi(b)-\psi(a)\leftharpoondown\phi_1(\psi(b))+x_1)+
$$
$$
((\psi(a)\psi(b)-\phi_2(\psi(a))\rightharpoondown\psi(b)-\psi(a)\leftharpoondown\phi_2(\psi(b)))+x_2)
$$
where $x_i\in A$ ($i=1,2$). This proves \eqref{eq4} and \ref{eq5}.
\end{proof}

\begin{theorem}\label{t1}
Let $R:L\rightarrow A$ be  an operator defined as
\begin{equation}\label{d1}
R(a)=\phi_1(\psi(a)).
\end{equation}
 Then $R$ is a Rota-Baxter operator of weight 1 on $A$.
\end{theorem}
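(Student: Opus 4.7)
The plan is to unravel everything in terms of the maps $\phi_1$ and $\psi$, using the multiplicative and module-theoretic identities provided by Proposition \ref{p2} to convert $\phi_1(\psi(a))\phi_1(\psi(b))$ into $R$ applied to a sum of three terms.

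First I would use identity \eqref{eq3} with $i=1$ to pull $\phi_1$ outside the product:
$$R(a)R(b)=\phi_1(\psi(a))\phi_1(\psi(b))=\phi_1\bigl(\psi(a)\psi(b)\bigr).$$
So it suffices to show that $\psi(a)\psi(b)=\psi(R(a)b+aR(b)+ab)$, after which applying $\phi_1$ and using the definition $R=\phi_1\circ\psi$ gives the Rota-Baxter identity of weight $1$.

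Next I would apply identity \eqref{eq5} to rewrite
$$\psi(ab)=\psi(a)\psi(b)-\phi_1(\psi(a))\rightharpoondown\psi(b)-\psi(a)\leftharpoondown\phi_1(\psi(b)),$$
which, in the notation $R=\phi_1\circ\psi$, reads
$$\psi(a)\psi(b)=\psi(ab)+R(a)\rightharpoondown\psi(b)+\psi(a)\leftharpoondown R(b).$$
Then identity \eqref{eq4} identifies the two module actions with comultiplication-free expressions in $\psi$: namely $R(a)\rightharpoondown\psi(b)=\psi(R(a)b)$ and $\psi(a)\leftharpoondown R(b)=\psi(aR(b))$. Collecting terms yields
$$\psi(a)\psi(b)=\psi\bigl(R(a)b+aR(b)+ab\bigr),$$
and applying $\phi_1$ to both sides together with the first step finishes the argument.

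There is no real obstacle; the proof is almost mechanical once Proposition \ref{p2} is in hand. The only place requiring care is the bookkeeping that the ``$\phi_1$-branch'' of the decomposition is the one producing weight $+1$ (rather than $-1$): this is precisely why \eqref{eq5} is used rather than \eqref{eq6}, and it explains why the parallel construction $R'(a)=\phi_2(\psi(a))$ would produce a Rota-Baxter operator of weight $-1$ via the same computation applied to \eqref{eq6}.
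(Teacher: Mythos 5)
Your proof is correct and is essentially the paper's own argument: both rest on combining \eqref{eq3} to push $\phi_1$ through the product, \eqref{eq4} to rewrite $\psi(R(a)b)$ and $\psi(aR(b))$ as module actions, and \eqref{eq5} to cancel everything. The only difference is cosmetic — you establish $\psi(a)\psi(b)=\psi(R(a)b+aR(b)+ab)$ before applying $\phi_1$, while the paper verifies that the difference $R(R(a)b+aR(b)+ab)-R(a)R(b)$ is $\phi_1$ of an expression that vanishes by \eqref{eq5}.
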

\begin{proof}
Using \eqref{p2}-\eqref{eq5} and \eqref{d1} for all $a,b\in A$ we
compute:
$$
R(R(a)b+aR(b)+ab)-R(a)R(b)=
$$
$$
=\phi_1(\psi(R(a)b+aR(b)+ab)-\phi_1(\psi(a))\phi_1(\psi(b))=
$$
$$
=\phi_1(R(a)\rightharpoondown \psi(b)+\psi(a)\leftharpoondown
R(b)+\psi(ab)-\psi(a)\psi(b))=0.
$$

Thus, $R$ is a Rota-Baxter operator of weight 1 on $A$.

\end{proof}

Using similar arguments one can proof the following:
\begin{theorem}
Let $Q$ be  an operator $Q:A\rightarrow A$ defined as
\begin{equation}\label{d2}
Q(a)=\phi_2(\psi(a)).
\end{equation}
 Then $Q$ is a Rota-Baxter operator of weight -1 on $A$.
\end{theorem}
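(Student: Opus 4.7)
The plan is to mirror the proof of Theorem \ref{t1} almost verbatim, swapping the role of $\phi_1$ for $\phi_2$ and using identity \eqref{eq6} in place of \eqref{eq5}. The key observation is that the Rota-Baxter identity of weight $\lambda = -1$ to be verified is
$$
Q(a)Q(b) = Q\bigl(Q(a)b + aQ(b) - ab\bigr),
$$
so the sign flip on $ab$ relative to the weight $+1$ case should match the sign flip in \eqref{eq6} relative to \eqref{eq5}.

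First I would apply $Q$ to the proposed right-hand side and expand using the definition \eqref{d2}:
$$
Q\bigl(Q(a)b + aQ(b) - ab\bigr) - Q(a)Q(b) = \phi_2\bigl(\psi(Q(a)b) + \psi(aQ(b)) - \psi(ab) - \psi(a)\psi(b)\bigr),
$$
where multiplicativity \eqref{eq3} for $\phi_2$ has been used to rewrite $\phi_2(\psi(a))\phi_2(\psi(b))$ as $\phi_2(\psi(a)\psi(b))$. Next, by \eqref{eq4} the first two terms inside $\phi_2$ become $Q(a)\rightharpoondown\psi(b) + \psi(a)\leftharpoondown Q(b)$.

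The crucial step is then to substitute for $\psi(ab)$ via \eqref{eq6}, which rearranges to
$$
-\psi(ab) - \psi(a)\psi(b) = -\phi_2(\psi(a))\rightharpoondown\psi(b) - \psi(a)\leftharpoondown\phi_2(\psi(b)) = -Q(a)\rightharpoondown\psi(b) - \psi(a)\leftharpoondown Q(b).
$$
Adding this to the two terms obtained from \eqref{eq4} yields zero inside $\phi_2$, which completes the verification.

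The only thing to be careful about is the sign bookkeeping: the minus sign in front of $xy$ in the weight $-1$ identity is precisely what is needed to cancel against the overall minus sign appearing in \eqref{eq6}, and no further obstacle is expected since Proposition \ref{p2} already supplies every ingredient (multiplicativity of $\phi_2$, the bimodule identity $\psi(ab)=a\rightharpoondown\psi(b)=\psi(a)\leftharpoondown b$, and the decomposition formula \eqref{eq6}).
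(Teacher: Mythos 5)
Your proof is correct and is exactly the ``similar argument'' the paper alludes to: it mirrors the proof of Theorem \ref{t1} with $\phi_1$ replaced by $\phi_2$, using the multiplicativity \eqref{eq3} of $\phi_2$, the identities \eqref{eq4}, and the decomposition \eqref{eq6} in place of \eqref{eq5}, with the sign of \eqref{eq6} matching the weight $-1$ sign as you note. The computation checks out and nothing further is needed.
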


\section{Rota-Baxter operators and classical Yang-Baxter equation for simple anti-commutative algebras.}

Let $L$ be a simple finite-dimensional anti-commutative algebra over
a field $F$ of characteristic zero and $r=\sum a_i\otimes b_i\in
L\otimes L$ is a solution of the classical Yang-Baxter equation
\eqref{lieYB} on $L$. Recall, that if $\Delta$ is a comultiplication
on $L$ then bialebra $(L,\Delta)$ is anti-commutative if and only if
the Drinfeld double is anti-commutative.

For convenience we will call an algebra $A$ semisimple if $A$ is a
direct sum of simple ideals.

\begin{theorem}
Let $(L,\Delta)$ be a structure of an anti-commutative bialgebra on
a simple anti-commutative algebra $L$ with semisimple non-simple
Drinfeld double and $R$
--- the Rota-Baxter operator defined as in \eqref{d1}. Then there is
a non skew-symmetric solution of the classical Yang-Baxter equation
\eqref{lieYB} $r=\sum a_i\otimes b_i$ such that $r+\tau(r)$ is
$L-invariant$ and a non-degenerate associative symmetric bilinear
form $\omega$ such that for all $a\in L$:
\begin{equation}\label{RBN}
R(a)=\sum\limits_i \omega(a_i,a)b_i.
\end{equation}
\end{theorem}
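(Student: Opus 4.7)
The plan is to build $\omega$ and $r$ from the linear maps $\psi : L \to L^*$ and $\phi_1 : L^* \to L$ of Section~2. I define the bilinear form by $\omega(a,b) = \psi(a)(b)$, fix a basis $\{e_\alpha\}$ of $L$ with dual basis $\{e^\alpha\}$ of $L^*$, and set
$$
r = \sum_\alpha e_\alpha \otimes \phi_1(e^\alpha) \in L\otimes L.
$$
Once the symmetry of $\omega$ is in hand, the required formula $R(a) = \sum_i \omega(a_i, a) b_i$ will follow by directly expanding $\psi(a) = \sum_\alpha \psi(a)(e_\alpha)\,e^\alpha$.

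Nondegeneracy of $\omega$ is automatic because $\psi$ is a vector space isomorphism. Associativity is a one-line consequence of the first half of \eqref{eq4}: from $\psi(ab) = \psi(a) \leftharpoondown b$ one gets $\omega(ab,c) = \psi(a)(bc) = \omega(a,bc)$. The second half of \eqref{eq4} gives the companion relation $\omega(ab,c) = \omega(b,ca)$, and iterating these two yields the cyclic identity
$$
\omega(uv,w) = \omega(u,vw) = \omega(v,wu) = \omega(w,uv).
$$
Since $L$ is simple, $L^2 = L$, so every $a\in L$ is a sum of products $x_k y_k$, and the equality $\omega(xy,b) = \omega(b,xy)$ from the cyclic identity yields $\omega(a,b) = \omega(b,a)$, i.e.\ symmetry. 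The identification of $R$ then follows from
$$
\sum_\alpha \omega(e_\alpha, a)\,\phi_1(e^\alpha) = \sum_\alpha \psi(a)(e_\alpha)\,\phi_1(e^\alpha) = \phi_1\!\left(\sum_\alpha \psi(a)(e_\alpha)\,e^\alpha\right) = \phi_1(\psi(a)) = R(a),
$$
where symmetry is used in the first equality.

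The remaining work is to show that $r$ is not skew-symmetric, that $C_L(r) = 0$, and that $r + \tau(r)$ is $L$-invariant. My strategy is to show that the given comultiplication coincides with the coboundary comultiplication $\Delta_r$ attached to our $r$. Pairing against $f \otimes g \in L^* \otimes L^*$, the multiplication on $L^*$ inside $D(L) = A_1 \oplus A_2$ is governed by the algebra homomorphisms $\phi_1, \phi_2$ of \eqref{eq3}, so $\langle \Delta(a), f\otimes g\rangle = (fg)(a)$ can be computed from the decompositions $f = (f - \phi_i(f)) + \phi_i(f)$, $g = (g - \phi_i(g)) + \phi_i(g)$; on the other hand $\langle \Delta_r(a), f\otimes g\rangle$ expands into an expression involving the map $\widehat{r}: f \mapsto \sum f(a_i) b_i = \phi_1(f)$. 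Careful bookkeeping inside $D(L)$ should match the two pairings. Once $\Delta = \Delta_r$ is in place, the assumption that $D(L)$ is anticommutative forces, by the coboundary theory of anticommutative bialgebras, both the $L$-invariance of $r + \tau(r)$ and the classical Yang-Baxter equation $C_L(r) = 0$; non-skew-symmetry of $r$ is then automatic, since skew $r$ would make $R$ a Rota-Baxter operator of weight $0$, contradicting Theorem~\ref{t1}. The identification $\Delta = \Delta_r$ is the step I expect to be the main obstacle.
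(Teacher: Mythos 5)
Your construction of $\omega$ and $r$ agrees with the paper's: since $Q(\psi(a),b)=\psi(a)(b)$, your form is the paper's $\omega(a,b)=Q(\psi(a),b)$, and your $r=\sum_\alpha e_\alpha\otimes\phi_1(e^\alpha)$ is just the coordinate-free description of the tensor $r_1$ representing $\phi_1$. Nondegeneracy, associativity via \eqref{eq4}, and the final identification of $R$ are all correct, and your symmetry argument (the cyclic identity from the two halves of \eqref{eq4} together with $L^2=L$) is a legitimate alternative to the paper's, which instead deduces symmetry from $Q(L_1,L_2)=0$.

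The problem is the part you defer. The identification $\Delta=\Delta_r$ (the paper in fact gets $\Delta=-\Delta_{r_1}$) is genuinely needed and you do not carry it out; the paper obtains it from two orthogonality relations between the simple ideals, namely $Q(L_1,L_2)=0$, which yields $r_1+\tau(r_2)=0$, and $(f-\phi_1(f))(g-\phi_2(g))=0$, which yields $fg=f\leftharpoondown\phi_2(g)+\phi_1(f)\rightharpoondown g$ and hence the formula for $\Delta$. More seriously, your route to $C_L(r)=0$ cannot work: for the variety of anticommutative algebras, anticommutativity of $D(L)$ is equivalent to $\tau\circ\Delta=-\Delta$ alone, which for a coboundary comultiplication gives only the $L$-invariance of $r+\tau(r)$ and imposes no condition on $C_L(r)$ (already for Lie bialgebras an invariant nonzero $C_L(r)$ is perfectly possible). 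In the paper the Yang--Baxter equation comes from a different source, Proposition \ref{p2}(1): $\phi_1$ is an algebra homomorphism because $L_1$ is an ideal of the semisimple double, and writing $\phi_1(fg)=\phi_1(f)\phi_1(g)$ in coordinates, once $\Delta=\pm\Delta_{r_1}$ is known, is precisely $C_L(r_1)=0$. That step is absent from your plan. Finally, your non-skew-symmetry argument has a small hole: being Rota--Baxter of weight $0$ and of weight $1$ is not contradictory for the zero operator; the clean argument is that $\tau(r_1)=-r_1$ combined with $r_1=-\tau(r_2)$ would force $r_1=r_2$, hence $\phi_1=\phi_2$ and $L_1=L_2$, contradicting $D(L)=L_1\oplus L_2$.
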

\begin{proof}
Since $L$ is simple $D(L)=L_1\oplus L_2$. Consider maps
$\phi_i:L^*\rightarrow L(i=1,2)$ defined in proposition 1. Since $L$
is a finite-dimensional algebra there is an element
$r_1=\sum\limits_i a_i\otimes b_i\in L\otimes L$ such that
$\phi_1(f)=\sum f(a_i)b_i$ for all $f\in L^*$.

Similarly, there is an element $r_2=\sum\limits_i c_i\otimes d_i\in
L\otimes L$ such that $\phi_2(f)=\sum f(c_i)d_i$. Since $\phi_1$ is
a homomorphism  for all $f,g\in L$ we have
$$
\phi_1([f,g])=-\sum [f,g](a_i)b_i=-\sum
f([a_j,a_i])g(b_j)b_i+f(a_j)g([b_j,a_i])b_i= $$ $$
=[\phi_1(f),\phi_1(g)]=\sum f(a_j)g(a_i) [b_j,b_i].
$$
Therefore $r_1$ is a solution of \eqref{lieYB}. Similar arguments
show that $r_2$ is also a solution of \eqref{lieYB}.

Since $L_1$ and $L_2$ are simple and the form $Q$ is associative,
$Q(L_1,L_2)=0$. Therefore for all $f,g\in L^*$ we have
$Q(f-\phi_1(f), g-\phi_2(g))=0$. Hence,
$$\sum\limits_i \langle f\otimes g, a_i\otimes b_i+d_i\otimes
c_i\rangle=0.$$ Consequently,
\begin{equation}\label{usl1}
r_1+\tau(r_2)=0.
\end{equation}

Also we have $(f-\phi_1(f))(g-\phi_2(g))=0$.
 Therefore,
 $$fg-f\leftharpoondown \phi_2(g)-
 \phi_1(f)\rightharpoondown g=0.$$
The last equality means that for all $a\in \mathbb{M}$
 $$
 fg(a)=\sum\limits_i f(g(c_i)d_ia)+g(f(a_i)ab_i).$$
 $$
 \langle f\otimes g, \Delta(a)\rangle=\langle f\otimes g,
\sum\limits_i d_ia\otimes c_i+a_i\otimes ab_i\rangle.
$$

Using  \eqref{usl1} we finally obtain
$$
 \langle f\otimes g, \Delta(a)\rangle=-\langle f\otimes g,
\sum\limits_i a_ia\otimes b_i-a_i\otimes ab_i\rangle.
$$
Thus, $\Delta=-\Delta_{r_1}$.

Anti-commutativity of $D(L)$ is equivalent to
$\tau(\Delta_{r_1}(a))=-\Delta_{r_1}(a)$. Therefore
$[r_1+\tau(r_1),a]=0$ for all $a\in L$ and $r_1+\tau(r_1)$ is
$L$-invariant.


Define a form $\omega(\cdot,\cdot)$ on $L$ by:

 $$\omega(a,b)=Q(\psi(a),b)
 $$
for all $a,b\in L$. It is clear that $\omega$ is bilinear and
non-degenerate. Let us prove that $\omega$ is associative and
symmetric.

Let $a,b\in L$ and  $f_1,f_2\in L^*$ such that $\psi(a)=f_1$ and
$\psi(b)=f_2$. Since $Q(L_1,L_2)=0$ we have:

$$
0=Q(f_1-\phi_2(f_1),f_2-\phi_1(f_2))=-Q(f_1,\phi_1(f_2))-Q(\phi_2(f_1),f_2).
$$

Similarly one can prove that
$$
Q(f_1,\phi_2(f_2))+Q(\phi_1(f_1),f_2)=0.
$$

Summing up the last two equations and using \eqref{eq2} we obtain
that $Q(f_1,b)-Q(a,f_2)=0$. Thus,  $\omega(a,b)=\omega(b,a)$ for all
$a,b\in L$.

Let $a,b,c\in L$. Using \eqref{eq4} and associativity of the form
$Q$ we compute:
$$
\omega([a,b],c)=Q(\psi([a,b]),c)=Q(\psi(a)\leftharpoondown
b,c)=Q([\psi(a),b],c)= $$ $$=Q(\psi(a),[b,c])=\omega(a,[b,c]).
$$

Thus, $\omega$ is a bilinear non-degenerate symmetric associative
form on $L$. And for all $a\in L$ we have
$$
R(a)=\phi_1(\psi(a))=\sum Q(\psi(a),a_i)b_i=\sum \omega(a,a_i)b_i.
$$
\end{proof}

\begin{corollary}
Let $(L,\Delta)$ be a structure of an anti-commutative bialgebra on
simple Lie algebra $L$ with semisimple non-simple Drinfeld double
and $R$ --- the Rota-Baxter operator defined as in \eqref{d1}. Then
there is a non skew-symmetric solution of the classical Yang-Baxter
equation \eqref{lieYB} $r=\sum a_i\otimes b_i$ such that $r+\tau(r)$
is $L-invariant$ and for all $a\in L$:
$$
R(a)=\sum\limits_i \langle a_i,a\rangle b_i.
$$
Here $\langle\cdot,\cdot \rangle$ is the Killing form on $L$.
\end{corollary}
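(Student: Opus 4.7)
The plan is to invoke the preceding theorem and then use the classical fact that on a simple finite-dimensional Lie algebra over a field of characteristic zero, the space of associative (i.e. invariant) symmetric bilinear forms is one-dimensional, spanned by the Killing form $\langle\cdot,\cdot\rangle$.

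First, I would apply the theorem to the given bialgebra structure $(L,\Delta)$. This produces a non skew-symmetric solution $r=\sum_i a_i\otimes b_i$ of the classical Yang-Baxter equation \eqref{lieYB} with $r+\tau(r)$ being $L$-invariant, together with a non-degenerate associative symmetric bilinear form $\omega$ on $L$ satisfying $R(a)=\sum_i\omega(a_i,a)b_i$. All the structural work has already been done in the theorem; what remains is to pass from $\omega$ to the Killing form.

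Next, since $L$ is a simple Lie algebra, the form $\omega$ must be proportional to $\langle\cdot,\cdot\rangle$: explicitly, $\omega=\lambda\langle\cdot,\cdot\rangle$ for some nonzero $\lambda\in F$. This is the standard Schur-type argument: both forms are non-degenerate and invariant, so each induces an isomorphism of $L$-modules $L\to L^*$; composing the $\omega$-isomorphism with the inverse of the Killing isomorphism gives an $L$-module endomorphism of the simple module $L$, which must be scalar.

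Finally, I would rescale the solution: set $r'=\lambda r=\sum_i a_i\otimes(\lambda b_i)$. Since \eqref{lieYB} is homogeneous of degree two in $r$, $r'$ is again a solution; since $r+\tau(r)$ is $L$-invariant, so is $r'+\tau(r')=\lambda(r+\tau(r))$; and $r'$ stays non skew-symmetric because $\lambda\neq 0$. Relabelling $\lambda b_i$ as $b_i'$ and writing
$$
R(a)=\sum_i\omega(a_i,a)\,b_i=\sum_i\lambda\langle a_i,a\rangle b_i=\sum_i\langle a_i,a\rangle b_i',
$$
yields exactly the formula asserted in the corollary. The only non-routine ingredient is the proportionality $\omega=\lambda\langle\cdot,\cdot\rangle$; the rest is bookkeeping of scalars, and there is no real obstacle beyond invoking uniqueness (up to scalar) of invariant symmetric bilinear forms on a simple Lie algebra.
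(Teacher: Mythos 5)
Your proposal is correct and follows essentially the same route as the paper: invoke the preceding theorem to get $r_1$ and $\omega$, use that every nondegenerate associative symmetric form on a simple Lie algebra is a nonzero scalar multiple $\lambda\langle\cdot,\cdot\rangle$ of the Killing form, and rescale $r_1$ to absorb $\lambda$. In fact your rescaling $r'=\lambda r_1$ is the right one, whereas the paper writes $r=\frac{1}{\lambda}r_1$, which appears to be a minor slip (it would produce $\lambda^{-2}R$ rather than $R$).
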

\begin{proof}
By theorem 3 there is a non-skew symmetric solution of CYBE $r_1$
with $L$-invariant symmetric part and $R(a)=\sum \omega(a,a_i)b_i$
for some non-degenerate associative symmetric bilinear form $\omega$
on $L$. But since $L$ is simple, there is a non-zero $\lambda\in F$
such that $\omega(a,b)=\lambda\langle a,b\rangle$. Therefore
$$R(a)=\sum \omega(a,a_i)b_i=\lambda \sum \langle a, a_i\rangle b_i.
$$

It is left to define $r$ as $r=\frac{1}{\lambda}r_1$ to prove the
theorem.
\end{proof}

\begin{theorem}
Let $L$ be a simple anti-commutative algebra and
$r=\sum\limits_ia_i\otimes b_i$ is a non skew-symmetric solution of
CYBE \eqref{lieYB} such that $r+\tau(r)$ is $L$ invariant. Then
there is a non-degenerate symmetric associative bylinear form
$\omega$ on $L$ such that an operator $R:L\rightarrow L$ defined as
$$
R(a)=\sum\limits_i \omega(a_i,a) b_i
$$
is a Rota-Baxter operator of non-zero weight.
\end{theorem}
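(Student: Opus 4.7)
The strategy is to construct from $r$ an anti-commutative bialgebra structure on $L$ whose Drinfeld double is semisimple and non-simple, and then invoke Theorem 3.

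First, I would define the comultiplication $\Delta := -\Delta_r$ on $L$, matching the sign convention that appeared in the proof of Theorem 3. The hypothesis that $r$ satisfies \eqref{lieYB} gives that $(L,\Delta)$ is an anti-commutative coalgebra in the usual coboundary fashion, while anti-commutativity of the Drinfeld double $D(L)$ is equivalent to $\tau(\Delta_r(a))=-\Delta_r(a)$ for all $a\in L$, which unfolds into $[a,r+\tau(r)]=0$ --- precisely the $L$-invariance of $r+\tau(r)$ that we are given. Thus $(L,\Delta)$ is an anti-commutative bialgebra.

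Next I would exhibit an explicit ideal decomposition $D(L)=L_1\oplus L_2$ constructed directly from $r$. Define $\phi_1:L^*\to L$ by $\phi_1(f):=\sum f(a_i)b_i$, set $r_2:=-\tau(r)$ with $\phi_2(f):=-\sum f(b_i)a_i$, and put $L_k:=\{f-\phi_k(f)\mid f\in L^*\}\subset D(L)$ for $k=1,2$. The claim is that $L_1$ and $L_2$ are ideals of $D(L)$. Closure of $L_k$ under right multiplication by elements of $L$, once the formulas for $\rightharpoonup$ and $\leftharpoondown$ and the definition $\Delta=-\Delta_r$ are unwound, reduces exactly to the identity $[a,r+\tau(r)]=0$, and so holds by hypothesis; closure under multiplication by elements of $L^*$ reduces to a tensorial identity that is precisely CYBE \eqref{lieYB} for $r$. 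The identity $r_1+\tau(r_2)=0$ combined with associativity and non-degeneracy of $Q$ then forces $Q(L_1,L_2)=0$, from which $D(L)=L_1\oplus L_2$ follows by a dimension count.

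Because $r$ is not skew-symmetric, we have $\phi_1\ne\phi_2$, so $L_1\ne L_2$ and $D(L)$ is semisimple but non-simple. Theorem 3 then applies to $(L,\Delta)$: with the maps $\phi_1,\phi_2,\psi$ coming from our construction, it produces a non-degenerate symmetric associative bilinear form $\omega(a,b):=Q(\psi(a),b)$ on $L$, and the operator $R(a)=\phi_1(\psi(a))=\sum\omega(a,a_i)b_i=\sum\omega(a_i,a)b_i$ is a Rota-Baxter operator of weight $1$ on $L$. The main obstacle in this plan is the verification that the subspaces $L_1$ and $L_2$ are genuine ideals of $D(L)$, rather than merely subalgebras: this is the point at which CYBE and the invariance of $r+\tau(r)$ must do all the work simultaneously, through a careful bookkeeping of $\rightharpoonup,\leftharpoonup,\rightharpoondown,\leftharpoondown$. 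Once the decomposition is in place, the remainder is a direct appeal to Theorem 3.
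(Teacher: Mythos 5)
Your proposal is correct and follows essentially the same route as the paper: build the coboundary anti-commutative bialgebra from $r$, realize the ideal $L_1$ as the graph of the contraction map $\phi_1$ (ideal-ness being exactly CYBE plus $L$-invariance of $r+\tau(r)$), obtain the complementary ideal via orthogonality with respect to $Q$, and then quote Theorems 1 and 3; your only deviation is that you exhibit $L_2$ explicitly from $-\tau(r)$ rather than taking $L_1^{\perp}$, which amounts to the same subspace. The one loose point is the step ``$Q(L_1,L_2)=0$ and $L_1\neq L_2$ imply $D(L)=L_1\oplus L_2$'': a bare dimension count does not rule out $L_1\cap L_2\neq 0$, and you need (as the paper does) the fact that every nonzero proper ideal of $D(L)$ has dimension $\dim L$, so that $L_1\cap L_2$, being an ideal properly contained in $L_1$, must vanish.
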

\begin{proof}

Define a comultiplication $\Delta_r$ on  $L$ as
$$
\Delta_r(a)=[r,a]
$$
for all $a\in L$. Since $r+\tau(r)$ is $L$ invariant we have that
$[r,a]=-[\tau(r),a]$ for all $a\in L$. Therefore
$\tau(\Delta(a))=-\Delta(a)$ and $D(L)$ is anti-commutative.

 We want to prove that Drinfeld double of the
bialgebra is semisimple and non-simple. For this consider a map
$\phi_1:L^*\rightarrow L$ defined as
\begin{equation}\label{m1}
\phi_1(f)=-\sum\limits_i f(a_i)b_i
\end{equation}

Since $r$ is a solution of \eqref{lieYB}, $\phi_1$ is a
homomorphism.

 Consider a subspace $L_1=\{ f+\phi_1(f)|\ f\in L^*\}$. We want to
to prove that $L_1$ is an ideal of $D(L)$. For every $a\in L$ we
have:
$$
[f+\phi_1(f),a]=f\leftharpoondown a+f\rightharpoonup
a+[\phi_1(f),a]=$$
$$=f\leftharpoondown a +\sum f([a_i,a])b_i+\sum
f(a_i)[b_i,a]-f(a_i)[b_i,a]=f\leftharpoondown a +\sum f([a_i,a])b_i
$$
On the other hand,
$$
\phi_1(f\leftharpoondown a)=-\sum (f\leftharpoondown
a)(a_i)b_i=-\sum f([a,a_i])b_i=\sum f([a_i,a])b_i$$
 and
$[f+\phi_1(f),a]\in L_1. $

 Now take $g\in L^*$.
We have
$$
[f+\phi_1(f),g]=fg+\phi_1(f)\rightharpoondown
g+\phi_1(f)\leftharpoonup g
$$
Since $\phi_1$ is a homomorphism we have:
$$
\phi_1([f,g]+\phi_1(f)\rightharpoondown
g)=[\phi_1(f),\phi_1(g)]+\phi_1((f)\rightharpoondown g)=$$ $$=\sum
f(a_i)g(a_j)[b_i,b_j]-f(a_i)g([a_j,b_i])b_j= \phi_1(f)\leftharpoonup
g$$ and $L_1$ is a proper ideal of $D(L)$. By definition $dim
(L_1)=dim(L)$.

If $V\subset D(L)$ then by $V^{\perp}$ denote the complement of $V$
with respect to $Q$, that is $V^{\perp}=\{l\in D(L)|\ Q(l,V)=0\}$.
If $V$ is a proper subalgebra in $D(L)$ then $V^{\perp}$ is a proper
ideal of $D(L)$.

 Consider $L_1^{\perp}$. Since $L_1$ is an  ideal of $D(L)$
 then $L_1^{\perp}$ is also an ideal if $D(L)$.  It means that $I=L_1\cap L_1^{\perp}$ is a proper ideal
 of $D(L)$ and therefore $I=L_1$  or $I=0$.

 Since $r+\tau(r)\neq 0$ there is $h=\sum f_j\otimes
g_j\in L^*\otimes L^*$ such that $\sum\limits_{i,j}
f_j(a_i)g_j(b_i)+f_j(b_i)g_j(a_i)\neq 0$. Then $\sum
Q(f_j+\phi_1(f_j),g_j+\phi_1(g_j))\neq0$ and $I=L_1\cap
L_1^{\perp}=0$. Therefore $D(L)=L_1\oplus L_1^{\perp}$ and
$L_1^{\perp}$ is isomorphic to the quotient algebra $D(L)/L_1$.

On the other hand, $D(L)=L\oplus L_1$ (as vector spaces), therefore
$L$ is also isomorphic to the quotient algebra $D(L)/L_1$. Thus, $L$
and $L_1^{\perp}$ are isomorphic. Similar arguments show that $L$ is
isomorphic to $L_1$ and  $D(L)$ is a sum of two simple ideals. Thus,
$D(L)$ is a semisimple non-simple algebra. And the statement of the
theorem follows from the definition of maps $\phi_i$ and theorems 1
and 3.

\end{proof}

\begin{corollary}

Let $L$ be a simple Lie algebra and $r=\sum\limits_ia_i\otimes b_i$
is a non skew-symmetric solution of CYBE \eqref{lieYB} such that
$r+\tau(r)$ is $L$ invariant. Then an operator $R:L\rightarrow L$
defined as
$$
R(a)=\sum\limits_i \langle a_i,a\rangle b_i
$$
is a Rota-Baxter operator of non-zero weight. Here
$\langle\cdot,\cdot \rangle$ is the Killing form on $L$.
\end{corollary}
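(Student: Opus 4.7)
The plan is to deduce this corollary directly from Theorem 4 by exploiting the fact that a simple Lie algebra has, up to a non-zero scalar, only one non-degenerate symmetric associative (invariant) bilinear form, namely the Killing form.

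First I would apply Theorem 4 to the given data: since a Lie algebra is in particular an anti-commutative algebra and $r$ is a non skew-symmetric solution of the CYBE with $L$-invariant symmetric part, the theorem supplies a non-degenerate symmetric associative bilinear form $\omega$ on $L$ and tells us that the operator
$$
R_\omega(a)=\sum_i \omega(a_i,a)\, b_i
$$
is a Rota-Baxter operator of some non-zero weight $\mu$.

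Next I would invoke the classical uniqueness of the invariant form on a simple Lie algebra over a field of characteristic zero: any non-degenerate symmetric associative bilinear form $\omega$ on such $L$ satisfies $\omega(x,y)=\lambda\langle x,y\rangle$ for some non-zero scalar $\lambda\in F$. Substituting into the expression for $R_\omega$ gives $R_\omega=\lambda R$, where $R$ is precisely the operator described in the statement of the corollary.

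Finally I would check the easy observation that rescaling a Rota-Baxter operator rescales its weight. Explicitly, from $R_\omega(x)R_\omega(y)=R_\omega\bigl(R_\omega(x)y+xR_\omega(y)+\mu xy\bigr)$ and $R_\omega=\lambda R$ one obtains, after dividing by $\lambda^2$,
$$
R(x)R(y)=R\!\left(R(x)y+xR(y)+\tfrac{\mu}{\lambda}\,xy\right),
$$
so $R$ is a Rota-Baxter operator of weight $\mu/\lambda$, which is non-zero because both $\mu$ and $\lambda$ are non-zero. There is no real obstacle here: the entire content is already packaged in Theorem 4, and the only mildly subtle point is the bookkeeping of how the weight transforms under the rescaling forced by comparing $\omega$ with the Killing form.
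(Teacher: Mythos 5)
Your proof is correct and follows the same route the paper intends: the paper leaves this corollary without an explicit proof, but its proof of the analogous Corollary to Theorem 3 uses exactly your argument (uniqueness of the non-degenerate invariant form on a simple Lie algebra up to a scalar $\lambda$, then comparison with the Killing form). Your explicit bookkeeping of how the weight transforms under the rescaling $R_\omega=\lambda R$ is a correct and welcome completion of the detail the paper handles by instead replacing $r$ with $\frac{1}{\lambda}r$.
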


{\bf Remark.} If $r$ is a non skew-symmetric solution of the
classical Yang-Baxter equation such that $\tau(r)+r$ is
$L$-invariant, then so is an element $r_1=\tau(r)$. Elements $r$ and
$-r_1$ induce the same bialgebra structure on $L$. And if $Q$ is a
Rota-Baxter operator with respect to $r_1$ then $Q$ has weight
$\lambda$ and $R+Q=-\lambda id$, where $\lambda$ is a weight of
operator $R$ and $id$ is the identity operator on $L$. This is not
surprising since if $R$ is a Rota-Baxter operator of weight 1, then
so is $-id-R$.

{\bf Example 1.}

Let $L=sl_2$, $x,h,y$ is the standard basis of $L$, that is $hx=2x$,
$hy=-2y$, $xy=h$.

Consider an element
$$
r=\alpha(h\otimes x-x\otimes h)+\frac{1}{4}h\otimes h+x\otimes y,
$$
where $\alpha\in F$. Then for every $\alpha\in F$ $r$ is a solution
of the classical Yang-Baxter equation and therefore induces on $L$ a
structure of a Lie bialgebra with semisimple Drinfeld double. By
theorem 4 the operator $R$ defined as \eqref{RBN} is a Rota-Baxter
operator of non-zero weight.

We have:
$$
R(x)=0,\ R(h)=2h+8\alpha x,\ \ R(y)=4(y-\alpha h).
$$

Direct computations shows that $R$ is a Rota-Baxter operator of
weight -4.

In order to compute the second Rota-Baxter operator $Q$ we need to
consider
$$
r_1=\tau(r)=-\alpha(h\otimes x-x\otimes h)+\frac{1}{4}h\otimes
h+y\otimes x.
$$

Thus,
$$
Q(x)=4x,\ Q(h)=-8\alpha x+2h,\ Q(y)=4\alpha h
$$
and $Q$ is a Rota-Baxter operator of weight -4. Note that $Q+R=4id$
where $id$ is the identity operator on $L$.

 The following example
shows that in general a non skew-symmetric solutions of the CYBE not
necessary induces a Rota-Baxter operator of non-zero weight.

 {\bf Example 2.} Let
$L=sl_2(\mathbb{C})$ and let $x,h,y$ be the standard basis of $L$,
that is $[h,x]=2x,\ [h,y]=-2y,\ [x,y]=h$. Consider an element
$r=x\otimes x$. Obviously, $r$ is a non skew-symmetric solution of
\eqref{lieYB}. The corresponding operator $R$ acts as follow:
$$
R(x)=0,\ R(h)=0,\ R(y)=4x$$ and is a Rota-Baxter operator of zero
weight.

\section{Rota-Baxter operators on the split simple Malcev algebra.}
In this section we consider simple non-Lie Malcev algebra.

Malcev algebras were introduced by A.I. Malcev \cite{M55} as tangent
algebras for local analytic Moufang loops.  The class of Malcev
algebras generalizes the class of Lie algebras and has a well
developed theory \cite{KuzSh}.

An important example of a non-Lie Malcev algebra is the vector space
of zero trace elements of a Caley-Dickson algebra with the
commutator bracket multiplication \cite{S62,K68}. In \cite{versh}
some properties of Malcev bialgebras were studied. In particular,
there were found  conditions for a Malcev algebra with a
comultiplication to be a Malcev bialgebra. In \cite{GMM} it was
found a connection between solutions of the classical Yang-Baxter
equation on Malcev algebras with Malcev bialgebras.

\begin{definition}An anticommutative algebra is called a Malcev algebra if for
all $x,y,z\in M$ the following equation holds:
\begin{equation}\label{mal1}
J(x,y,xz)=J(x,y,z)x,
\end{equation}
where  $J(x,y,z)=(xy)z+(yz)x+(zx)y$ is the jacobian of elements
$x,y,z$.
\end{definition}

In \cite{GMM} it was proved that non skew-symmetric solutions of the
classical Yang-Baxter equation on simple Malcev algebra $M$ induces
on $M$ a structure of Malcev biialgebra with semisimple Drinfeld
double.

Let $M$ is a simple Malcev algebra. Then $M$ is a simple Lie algebra
or the 7-dimentional Malcev algebra isomorphic to the commutator
algebra of traceless elements of the split Caley-Dixon algebra
\cite{Kuz}.

{\bf Example 3}. Let $\mathbb{M}$ be the simple Malcev algebra over
the field of complex numbers $\mathbb{C}$. In this case $\mathbb{M}$
has a basis
 $h,x,x',y,y',z,z'$
 with the following
 table of multiplication:
$$
hx=2x,\ hy=2y,\ hz=2z,$$
$$
hx'=-2x',\ hy'=-2y',\ hz'=-2z',$$
$$
xx'=yy'=zz'=h,
$$
$$
xy=2z',\ yz=2x',\ zx=2y',$$
$$x'y'=-2z,\ y'z'=-2x,\
z'x'=-2y.$$ The remaining products are zero. In \cite{GMM} it was
proved that up to automorphism any non skew-symmetric solution of
the classical Yang-Baxter equation $r$ has the following form:
$$
r=r_0+\frac{1}{4}h\otimes h+x\otimes x'+y'\otimes y+z\otimes z'
$$
where
$$
r_0=\alpha (h\otimes x-x\otimes h)+ \beta(h\otimes y'-y'\otimes
h)+\gamma(h\otimes z-z\otimes h) +$$
$$
\delta(x\otimes y'-y'\otimes x)- 2\beta(x\otimes z-z\otimes
x)+\mu(y'\otimes z-z\otimes y').
$$
Scalars $\alpha,\beta,\gamma,\delta,\mu$  are arbitrary. The
corresponding normalized(of weight -1) Rota-Baxter  operator is the
following:
$$
R(h)=\dfrac{1}{2}h+2\alpha x+2\beta y'+2\gamma z,\ R(x)=0,\
R(x')=x'-\alpha h+\delta y'-2\beta z
$$
$$
R(y)=y-\beta h-\delta x+\mu z,\ R(y')=R(z)=0,\ R(z')=z'-\gamma
h+2\beta x-\mu y'.
$$

\bigskip

 \end{document}